\theoremstyle{plain}
\newtheorem{theorem}{Theorem}
\newtheorem{corollary}[theorem]{Corollary}
\newtheorem{proposition}[theorem]{Proposition}
\theoremstyle{definition}
\theoremstyle{remark}
\newtheorem{remark}{Remark}
\def\u{{\mathfrak{u}}}
\def\pp{\hbox{\protect\smaller[2]$\mathcal{P}$}}
\def\q{\hbox{\protect\smaller[2]$\mathcal{Q}$}}
\def\R{{\mathscr{R}}}
\def\RT{{\mathfrak{T}}}
\def\G{{\bo{\mathfrak{G}}}}
\def\HG{\mathbb{H}^+\!/\G}
\def\GR{{\G_{\!\!\s[5]\R}}}
\def\bcQ{{\bo{\mathcal{Q}}}}
\def\ellK{\mathsf{K}}
\def\btau{{\bo{\tau}}}
\def\balpha{{\bo{\alpha}}}
\def\bbeta{{\bo{\beta}}}
\def\bgamma{{\bo{\gamma}}}
\def\bdelta{{\bo{\delta}}}
\def\dvphi{\,\dot{\!\vphi}}
\def\tsint{\raise-0.15ex\hbox{\Larger[2]{\smallint}}\!}
\title[Analytic connections on Riemann surfaces]
{Analytic connections\\on Riemann surfaces and orbifolds}
\author[Yu.~Brezhnev]{Yurii V.~Brezhnev}
\thanks{Research was supported by the Tomsk State University Academic
D.~Mendeleev Fund Program.}
\keywords{Riemann surfaces, 1-dimensional orbifolds, uniformization,
connections in cotangent bundles, automorphic forms, ordinary
differential equations.}
\address{Department of Quantum Field Theory, Tomsk State University,
634050, Tomsk, Russia}
\email{brezhnev@mail.ru}
\date{\today}
\date{10th March 2015} 
\begin{document}

\hfill{\smaller[3]Journal of Geometry and Physics (2015) {\bf 97},
166--176}

\hfill{\blue\smaller[3]\ttt{http://dx.doi.org/10.1016/j.geomphys.%
2015.07.005}}\\\\\\

\begin{abstract}
We give a differentially closed description of the uniformizing
representation to the analytical apparatus on Riemann surfaces and
orbifolds of finite analytic type. Apart from well-known automorphic
functions and Abelian differentials it involves construction of the
connection objects. Like functions and differentials, the
connection, being also the fundamental object, is described by
algorithmically derivable \odes. Automorphic properties of all of
the objects are associated to different discrete groups, among which
are excessive ones. We show, in an example of the hyperelliptic
curves, how can the connection be explicitly constructed. We study
also a relation between classical/traditional `linearly
differential' viewpoint (principal Fuchsian equation) and
uniformizing $\tau$-representation of the theory. The latter is
shown to be supplemented with the second (to the principal) Fuchsian
equation.
\end{abstract}

\maketitle

\tableofcontents \thispagestyle{empty}

\newpage
\section{Introduction}

\noindent Riemann surfaces are of fundamental importance to the
mathematical physics because most effective part of the modern
differential/integral calculus is related, one way or the other,
with a complex analysis on a certain Riemann surface of (or not) a
certain analytic function. Surfaces of a finite genus are
distinctive in that the calculus is the best elaborated one with lot
of applications. Strangely enough, the standard differential
apparatus on such kind objects of higher genera ($g>1$) cannot be
considered as completely closed; this remark requires some
explanation.

Let $\R$ be a finite genus Riemann surface  determined by
irreducible algebraic equation
\begin{equation}\label{1}
F(x,y)=0\,.
\end{equation}
Uniformizing representation of $\R$ is given by a pair of
single-valued analytic functions $x=\vphi(\tau)$, $y=\vpsi(\tau)$,
wherein the global uniformizer $\tau$ belongs to the upper
half-plane $\Hp$, that is $\Im(\tau)>0$. Functions $\vphi$ and
$\vpsi$ are the automorphic ones with respect to an infinite
discrete Fuchsian group $\GR$ \cite{ford}:
\begin{equation}\label{scalar}
\vphi\mbig[7](\Mfrac{\alpha\,\tau+\beta}{\gamma\,\tau+\delta}
\mbig[7])=\vphi(\tau)\,,\qquad
\vpsi\mbig[7](\Mfrac{\alpha\,\tau+\beta}{\gamma\,\tau+\delta}
\mbig[7])=\vpsi(\tau)\,,\qquad \forall\tau\in\Hp\,,
\end{equation}
where $\big(\begin{smallmatrix}\alpha&\beta\\
\gamma&\delta\end{smallmatrix}\big)\in\GR
\subset\mathrm{PSL}_2(\mathbb{R})$. Complex analysis on $\R$
includes Abelian differentials $R(x,y)\,dx$, their integrals $\tsint
R(x,y)\,dx$, and the $\R$ itself is completely determined by periods
of Abelian integrals that are holomorphic (everywhere finite)
\cite{farkas}. We know also that if some function $\psi(\tau)$ is a
$\tau$-representation for any of the differentials above then its
automorphic property is characterized by a weight-2 automorphic form
\cite{ford,forster,farkas}:
\begin{equation}\label{diff}
\psi\mbig[7](\Mfrac{\alpha\,\tau+\beta}{\gamma\,\tau+\delta}\mbig[7])
=(\gamma\,\tau+\delta)^2\cdot\psi(\tau)\,.
\end{equation}

In the uniformization theory automorphic functions are described by
the 2nd order linear ordinary differential equations (\odes) of a
Fuchsian class \cite{ford}
\begin{equation}\label{Q}
\Psi_{\!\!\mathit{xx}}=\frac12\,\bcQ(x,y)\,\Psi\,,
\end{equation}
where $\bcQ$ is a rational function of its arguments. In general,
this is a Fuchsian equation with algebraic coefficients. The
uniformizing parameter $\tau$ is then defined as a ratio
\begin{equation}\label{tau}
\tau=\frac{\Psi_2(x)}{\Psi_1(x)}
\end{equation}
of the linearly independent solutions to Eq.~\eqref{Q} and inversion
of this ratio determines one of the uniformizing functions:
$x=\vphi(\tau)$. It is well known that the theory is equivalent to
the 3rd order \ode\ $\{\tau,x\}=-\bcQ(x,y)$ containing no the
auxiliary $\Psi$-function, where $\{\tau,x\}$ is the standard
notation for the Schwarz derivative \cite{ford}
\begin{equation}\label{SD}
\{\tau,x\}\DEF\frac{\tau_{\mathit{xxx}}}{\tau_x}-\frac32\,
\frac{\pow{\tau}{\mathit{xx}}{2}}{\pow{\tau}{\mathit{x}}{2}}\,.
\end{equation}

Since the theory is described by the third order \odes, its complete
data set is not exhausted by functions \eqref{scalar} and their
first order differentials \eqref{diff}: the second order
differentiation is missing. Alternatively, the $\R$ may be thought
of as a 1-dimensional complex manifold \cite{forster} and all the
objects above can be treated from the differential geometric
viewpoint. Then functions \eqref{scalar}--\eqref{diff} represent
scalars and 1-forms and calculus should involve a covariant
differentiation of these and other tensor fields. By this means, in
order to close the complex analytic theory, we have to introduce (at
least) a canonical bundle over our $\R$ and corresponding connection
object $\Gamma$. Partially, some ingredients of such a view on the
theory have already been appeared in the literature. Dubrovin
\cite{dubrovin} gave a geometric treatment  to the famous Chazy
equation $\pi\,\dddot{\smash[b]{\eta}}=
12\,\ri\,(2\,\eta\,\ddot\eta-3\,\dot\eta^2)$ when the group $\GR$ is
the genus zero full modular group
$\mathrm{PSL}_2(\mathbb{Z})\FED\bo{\Gamma}(1)$  and Hawley \&
Schiffer \cite{hawley} introduced the connection $\Gamma$ in the
context of conformal mappings of planar domains and multi-connected
representations of $\R$. The well-known modular forms \cite{zagier}
are the particular cases of automorphic forms when group is a
subgroup of $\bo{\Gamma}(1)$. They possess interesting differential
properties and some of them---\odes\ for some low level groups
$\Gamma_0(N)$---are constructed in \cite{maier}. It may  be remarked
here that even the theory of the $\bo\Gamma(1)$-connection function,
\ie, the Chazy--Weierstrass function $\eta(\tau)$, is not restricted
by the Chazy equation mentioned above. Recent work \cite{jon}
provides an alternative theory (and nontrivial application) in the
language of linear Fuchsian equations \eqref{Q}.

The known examples \cite{dubrovin,maier,zagier} are concerned only
with the zero genus cases and general automorphic properties of
bundles and connections on them, to our knowledge, are not
considered in the literature. This is the subject matter of the
present work. We give an analytically closed geometric description
for the differential calculus on Riemann surfaces of finite analytic
type (genus and number of punctures are finite) through the
uniformizing $\tau$-representation for the connection objects
$\Gamma(\tau)$ and characterize their differential properties. More
precisely, not only do functions and differentials satisfy some
autonomic 3rd order \odes\ (the known fact
\cite{ford,hurwitz,zagier}), but connections also satisfy equations
of such a kind. What is more, a remarkable property of (analytic)
connections on $\R$'s of arbitrary genera is the fact that all of
them come from a trivial connection on certain orbifolds of the
\emph{zero} genus and satisfy autonomic \odes. These \odes\ are
algorithmically derivable.

\section{Invariant quantities on $\R$}

\subsection{Invariant counterparts of Fuchsian equations}

Let Fuchsian equation \eqref{Q} determine, through its monodromical
group $\G_x$, an exact representation of fundamental group
$\pi_1^{}$ of a certain orbifold or the $\R$ itself. However, from
differential geometric viewpoint this equation is not well defined;
it has no invariant (autonomic) form. Indeed, it contains explicitly
the quantity $x$ which, in a generic case $F_y(x,y)\ne 0$, is the
standard usage for a local coordinate on $\R$. In turn the quantity
$\tau(x)$ coming from the definition \eqref{tau} obviously does not
produce the geometric object. However  we may take 1-dimensionality
of $\R$ into account and swap around the standard coordinate $x$ and
`object' $\tau$; thus $x$ may be thought of as the scalar `field'
quantity $x=x(\pp)$ \cite{nevanlinna} being represented by function
$x=\vphi(\tau)$  on the universal cover $\Hp$. Then the automorphic
property \eqref{scalar} becomes nothing but the $\HG$-factor
topology reformulation to the property of $x$ to be a scalar:
\begin{equation}\label{xp}
\tilde x(\pp)=x(\pp)\,,\qquad
\vphi\mbig[7](\Mfrac{\alpha\,\tau+\beta}{\gamma\,\tau+\delta}
\mbig[7])=\vphi(\tau)\,;
\end{equation}
here $\tilde x(\pp)$ is a value of the quantity $x$ at point
$\pp\in\R$ under the  coordinate choice\footnote{Greek symbols
$\alpha$, $\beta$, \ldots\ will be used for discrete group
transformations and Latin $a$, $b$, \ldots\ for coordinate changes.}
\begin{equation}\label{proj}
\tilde \tau=\frac{a\,\tau+b}{c\,\tau+d}
\end{equation}
and arbitrariness of the  real numbers $(a,b,c,d)$ comes from
well-known projective structure on $\Hp$. The second generator
$y(\pp)=\vpsi(\tau)$ of the field of meromorphic functions on $\R$
is determined by the same properties as \eqref{xp}. Therefore we
should do an inverting the Schwarzian \eqref{SD} into the object
$\{x,\tau\}$ and, denoting $[x,\tau]\DEF-\{\tau,x\}$, introduce an
invariant (coordinate-free) form of the principal equation
\eqref{Q}:
\begin{equation}\label{main}
[x,\tau]=\bcQ(x,y)\,,
\end{equation}
where
\begin{equation}\label{MD}
[x,\tau]\DEF
\frac{\dddot{\smash[b]{x}}}{\dot x^3}-\frac32\,\frac{\ddot x^2} {\dot
x^4}
\end{equation}
and the dot above a symbol stands for a $\tau$-derivative. Proof
uses the known property of the Schwarz derivative
$$
-\dot x^2\,\{\tau,x\}=\{x,\tau\}\,.
$$

Thus the 3rd order differential object $[x,\tau]$ represents, due to
equation \eqref{main}, a scalar function on $\R$. It follows also
that the nonlinear differential operator $z\mapsto [z,\tau]$
generates scalar objects from the scalar ones. Indeed, the
well-known transformation rule for the Schwarz derivative of a
function composition $\tau=f(\mu)$ and $\mu=g(z)$ \cite{ford}, \ie,
\begin{equation}\label{tensor}
\{\tau,\mu\}\,d\mu^2+\{\mu,z\}\,dz^2=\{\tau,z\}\,dz^2\,,
\end{equation}
being rewritten in terms of the $[x,\tau]$-objects, implies that if
$z$ is any rational (scalar) function on $\R$, that is $z=R(x,y)$,
then
$$
[z,\tau]=[R,x]+\frac{1}{\pow{R}{x}{2}}\,\bcQ(x,y)\,;
$$
this expression is again the rational function on $\R$.

As for  the Abelian differentials, their two structure
properties---analogs of \eqref{xp}---have obviously the following
form \cite{forster}:
\begin{equation*}\label{Diff}
\widetilde{\psi}(\pp)=\frac{d\tau}{d\tilde\tau}\cdot\psi(\pp)\,,
\qquad\psi\mbig[7](\Mfrac{\alpha\,\tau+\beta}{\gamma\,\tau+\delta}
\mbig[7])=(\gamma\,\tau+\delta)^2\cdot\psi(\tau)\,,
\end{equation*}
where $\widetilde{\psi}(\pp)$ represents the 1-form
$\omega=\psi(\pp)\,d\tau$ in the coordinate $\tilde\tau$. Behavior
of the higher order analytic differential $k$-forms
$f(\pp)\,d\tau^k$ is defined in a similar manner:
$$
\skew4\widetilde{f}(\pp)=\mbig[7](\frac{d\tau}{d\tilde\tau}\mbig[7])
^{\!\!k}\cdot f(\pp)\,,
\qquad
f\mbig[7](\Mfrac{\alpha\,\tau+\beta}{\gamma\,\tau+\delta}\mbig[7])
=(\gamma\,\tau+\delta)^{2k}\cdot f(\tau)\,,
$$
Since the 1-dimensional complex analysis under construction deals
with the analytic differentials and their powers, we have in fact to
supplement the base $\R$ with a canonical (cotangent, holomorphic,
and line) bundle $K$ \cite{accola,jost}.

\subsection{Automorphic property for a connection}

The transformation properties of geometric objects in  cotangent
bundles are completely determined by transformations in the base
$\R$. Therefore we may take the known transformation law for a
connection form $\Gamma_{\![4]\alpha}$ and adopt it to our
1-dimensional case $T^*(\R)$; clearly, $\Gamma_{\![4]\alpha}$ must
also respect the projective structure \eqref{proj}. For an arbitrary
vector bundle with a structure group $G$ we have \cite{novikov}
$$
\widetilde\Gamma_{\![4]\alpha}=\frac{\partial z^\beta}{\partial
\tilde z^\alpha}\, \Big(G\,\Gamma_{\![4]\beta}\, G^{\sm1}-
\frac{\partial
G}{\partial z^\beta}\,G^{\sm1} \Big)\,,
$$
where the simultaneous transformations of coordinates in the base
$z\mapsto \tilde z$ and in a fiber $\Psi\mapsto \widetilde\Psi$ are
carried out:
$$
z\mapsto \tilde z=\tilde z(z)\,,\qquad \Psi\mapsto \widetilde\Psi=
G(\pp)\,\Psi\,.
$$
Of course, the covariant differentiation $\nabla$ is defined here by
the standard rule:
$\nabla_{\!\!\alpha}\Psi=\frac{\partial\,\Psi}{\partial z^\alpha}-
\Gamma_{\!\!\alpha}\Psi$. Let us change notation for the old/new
coordinates $(z,\tilde z)\to(\tau, \tilde\tau)$ and take into
account that for cotangent bundles we have to put
$$
G(\pp)=\frac{d\tau}{d\tilde \tau}\,.
$$
Applying all this to the case under consideration, we get
\begin{equation}\label{GP}
\widetilde\Gamma(\pp)=\frac{d\tau}{d\tilde\tau}\!\mbig[6]|_{\!\pp}
\cdot\Gamma(\pp)+
\mbig[7](\frac{d}{d\tilde\tau}\!
\ln\frac{d\tau}{d\tilde\tau}\mbig[7])_{\!\!\pp}
\end{equation}
and the above mentioned projective structure leads to the following
property.

\begin{proposition}\label{P1}
The transformation rule for an analytic connection on $T^*(\R)$
realized on the universal cover $\Hp$ reads as
\begin{equation}\label{tmp}
(a\,d-b\,c)\,\widetilde\Gamma\!\mbig[7](\Mfrac{a\,\tau+b}{c\,\tau+d}
\mbig[7])=(c\,\tau+d)^2\cdot\Gamma(\tau)+2\,c\,(c\,\tau+d)\,,
\end{equation}
where $\{a,b,c,d\}$ are real.
\end{proposition}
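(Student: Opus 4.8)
The plan is to specialize the general connection transformation law \eqref{GP} to the particular coordinate change furnished by the projective structure \eqref{proj} and then simplify. Everything reduces to computing two elementary quantities attached to the M\"obius substitution $\tilde\tau=\frac{a\,\tau+b}{c\,\tau+d}$: the Jacobian factor $\frac{d\tau}{d\tilde\tau}$ that multiplies $\Gamma$, and the inhomogeneous term $\frac{d}{d\tilde\tau}\ln\frac{d\tau}{d\tilde\tau}$.

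First I would differentiate \eqref{proj} to get $\frac{d\tilde\tau}{d\tau}=\frac{a\,d-b\,c}{(c\,\tau+d)^2}$, whence $\frac{d\tau}{d\tilde\tau}=\frac{(c\,\tau+d)^2}{a\,d-b\,c}$. This supplies the homogeneous (weight-two) factor in \eqref{GP}, and already explains the appearance of both $(c\,\tau+d)^2$ and the determinant $a\,d-b\,c$ in the claimed identity.

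Next I would handle the logarithmic term. Writing $\ln\frac{d\tau}{d\tilde\tau}=2\ln(c\,\tau+d)-\ln(a\,d-b\,c)$ and converting the derivative in the new variable into a $\tau$-derivative by the chain rule, $\frac{d}{d\tilde\tau}=\frac{d\tau}{d\tilde\tau}\,\frac{d}{d\tau}$, I obtain $\frac{d}{d\tilde\tau}\ln\frac{d\tau}{d\tilde\tau}=\frac{(c\,\tau+d)^2}{a\,d-b\,c}\cdot\frac{2\,c}{c\,\tau+d}=\frac{2\,c\,(c\,\tau+d)}{a\,d-b\,c}$. Substituting both expressions into \eqref{GP}, reading the left-hand $\widetilde\Gamma$ as the value at the image point $\tilde\tau=\frac{a\,\tau+b}{c\,\tau+d}$ and $\Gamma$ as the value at $\tau$, and then clearing the common denominator $a\,d-b\,c$, yields \eqref{tmp} at once.

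Since the identity follows by direct substitution, there is no genuine analytic obstacle; the only point requiring care is the bookkeeping of base points and variables. One must remember that the inhomogeneous term in \eqref{GP} is differentiated with respect to the \emph{new} coordinate $\tilde\tau$ (hence the chain-rule conversion above) and that $\widetilde\Gamma$ on the left is evaluated at $\tilde\tau$ rather than at $\tau$. Keeping the factor $a\,d-b\,c$ unnormalized, instead of rescaling the matrix to lie in $\mathrm{PSL}_2(\mathbb{R})$, is precisely what produces the coefficient on the left-hand side of \eqref{tmp}.
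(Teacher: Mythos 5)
Your computation is correct and is exactly the derivation the paper intends: Proposition~\ref{P1} is stated as an immediate consequence of specializing the general law \eqref{GP} to the projective coordinate change \eqref{proj}, and your evaluation of $\frac{d\tau}{d\tilde\tau}=\frac{(c\,\tau+d)^2}{a\,d-b\,c}$ and of $\frac{d}{d\tilde\tau}\ln\frac{d\tau}{d\tilde\tau}=\frac{2\,c\,(c\,\tau+d)}{a\,d-b\,c}$, followed by clearing the determinant, reproduces \eqref{tmp} precisely. The bookkeeping remarks about where $\widetilde\Gamma$ is evaluated and about not normalizing $a\,d-b\,c$ are apt and match the paper's conventions.
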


In order to pass to the $\tau$-representation of our $\R$'s we now
need to satisfy the factorization of the $\Hp$-topology with respect
to some discrete group $\G$ acting on $\Hp$. To put it differently,
we have to obtain a property of the function $\Gamma(\tau)$
representing on $\Hp\!\!/\G$ the connection object $\Gamma(\pp)$.

\begin{theorem}\label{T1}
Let $\big(\begin{smallmatrix}\alpha&\beta\\
\gamma&\delta\end{smallmatrix}\big)\in\GR$ be an exact matrix
representation of $\pi_1^{}(\R)$ and $\Gamma(\tau)$ be a
uniformizing representation of the connection object on $T^*(\R)$.
Then
\begin{equation}\label{Gauto}
(\alpha\,\delta-\beta\,\gamma)\,\Gamma\!
\mbig[7](\Mfrac{\alpha\,\tau+\beta}
{\gamma\,\tau+\delta}\mbig[7])
=(\gamma\,\tau+\delta)^2\cdot\Gamma(\tau)+
2\,\gamma\,(\gamma\,\tau+\delta)\,.
\end{equation}
\end{theorem}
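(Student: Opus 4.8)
The plan is to obtain \eqref{Gauto} as the restriction of the projective transformation rule \eqref{tmp} to the uniformizing group, with an invariance condition imposed. The matrices $\big(\begin{smallmatrix}\alpha&\beta\\\gamma&\delta\end{smallmatrix}\big)\in\GR\subset\mathrm{PSL}_2(\mathbb{R})$ act on $\Hp$ as real fractional-linear transformations, so they form a subfamily of the projective coordinate changes \eqref{proj} already covered by Proposition~\ref{P1}. Hence \eqref{tmp} remains valid after the substitution $(a,b,c,d)\to(\alpha,\beta,\gamma,\delta)$, relating the representation $\widetilde\Gamma$ in the transformed coordinate to the representation $\Gamma$ in the original one. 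Everything then hinges on a single identification: for a deck transformation of $\G$ one must have $\widetilde\Gamma\equiv\Gamma$ as functions.

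Establishing $\widetilde\Gamma\equiv\Gamma$ is the crux, and it is geometric rather than computational. It is the exact analogue, for connections, of the mechanism producing the automorphic laws \eqref{scalar} and \eqref{diff}: an object living on the base $\R=\HG$, pulled back along the universal covering $\pi\colon\Hp\to\R$, becomes invariant under the deck group $\G$, and this invariance, read through the transformation law of the object in question, yields its functional equation. Concretely, $\Gamma(\tau)$ is by hypothesis the uniformizing representation of a connection on $\R$, that is, the pullback $\pi^{*}$ of $\Gamma(\pp)$ expressed in the standard frame $d\tau$. An element $\sigma\in\G$ with matrix $\big(\begin{smallmatrix}\alpha&\beta\\\gamma&\delta\end{smallmatrix}\big)$ is a deck transformation, so $\pi\circ\sigma=\pi$ and $\sigma$ covers the identity on $\R$. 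Consequently the pulled-back connection is $\G$-invariant, the two preimages $\tau$ and $\sigma(\tau)$ of one and the same point $\pp$ carry identical connection data, and the representation produced in the coordinate $\sigma(\tau)$ is given by the same uniformizing function; that is, $\widetilde\Gamma\equiv\Gamma$. Equivalently, $\widetilde\Gamma\equiv\Gamma$ is nothing but the demand that $\Gamma(\tau)$ descend to a single-valued object on $\HG$, which is precisely what it means to be a uniformizing representation.

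With this in hand the proof closes by direct substitution: putting $(a,b,c,d)=(\alpha,\beta,\gamma,\delta)$ and $\widetilde\Gamma=\Gamma$ in \eqref{tmp} turns the left-hand side into $(\alpha\delta-\beta\gamma)\,\Gamma\!\big(\frac{\alpha\tau+\beta}{\gamma\tau+\delta}\big)$ and the right-hand side into $(\gamma\tau+\delta)^2\,\Gamma(\tau)+2\gamma(\gamma\tau+\delta)$, which is exactly \eqref{Gauto}. Since $\GR\subset\mathrm{PSL}_2(\mathbb{R})$ forces $\alpha\delta-\beta\gamma=1$ the determinant prefactor is in fact trivial, but I would keep it so as to display \eqref{Gauto} as the faithful restriction of \eqref{tmp}. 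The only genuine obstacle is the middle step; once the descent property $\widetilde\Gamma\equiv\Gamma$ is granted the rest is mechanical. It is worth noting that the inhomogeneous term $2\gamma(\gamma\tau+\delta)$, which distinguishes \eqref{Gauto} from the weight-$2$ tensorial law \eqref{diff}, is exactly the trace of the non-tensorial (affine) summand in the connection transformation law \eqref{GP}; a connection is not a $1$-form precisely because of this term.
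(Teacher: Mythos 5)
Your proof is correct and follows essentially the same route as the paper: both derive \eqref{Gauto} by combining the local transformation law \eqref{GP}/\eqref{tmp} with the fact that the connection descends to the quotient $\HG$, so that $\GR$-equivalent points of $\Hp$ carry identical connection data. The paper packages this descent step as the coordinate-invariance of the combination $\Gamma(\pp)\,d\tau+d\ln d\tau$ equated at equivalent points $\pp\sim\q$, whereas you phrase it as the identification $\widetilde\Gamma\equiv\Gamma$ for deck transformations before substituting into \eqref{tmp}; these are two wordings of the same argument.
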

\begin{proof}
Rewrite the property \eqref{GP} in form of `separated' differentials:
$$
\widetilde\Gamma(\pp)\,d\tilde\tau|_{{\mathcal{P}}}^{\mathstrut}=
\Gamma(\pp)\,d\tau|_{{\mathcal{P}}}^{\mathstrut}+(d\ln\!d\tau)_
{{\mathcal{P}}}^{\mathstrut}-
(d\ln\! d\tilde\tau)_{{\mathcal{P}}}^{\mathstrut}\,.
$$
Hence
$$
\Gamma(\pp)\,d\tau|_{{\mathcal{P}}}^{\mathstrut}+(d\ln\! d\tau)_
{{\mathcal{P}}}^{\mathstrut}=
\widetilde\Gamma(\pp)\,d\tilde\tau|_{{\mathcal{P}}}^{\mathstrut}
+
(d\!\ln d\tilde\tau)_{{\mathcal{P}}}^{\mathstrut}
$$
and the quantity
$\Gamma(\pp)\,d\tau|_{{\mathcal{P}}}^{\mathstrut}+(d\ln\! d\tau)_
{{\mathcal{P}}}^{\mathstrut}$ is thus a scalar invariant. Let
points $\pp$ and $\q$ be equivalent  with respect to group $\GR$,
that is $\pp\sim \q$. We then may equate  values of the latter
scalar invariant taken at $\pp$ and $\q$:
$$
\Gamma(\pp)\,d\tau|_{{\mathcal{P}}}^{\mathstrut}+(d\ln\! d\tau)_
{{\mathcal{P}}}^{\mathstrut}=
\Gamma(\q)\,d\tau|_{{\mathcal{Q}}}^{\mathstrut}+(d\ln\! d\tau)_
{{\mathcal{Q}}}^{\mathstrut}\,.
$$
Pass to the notation $\tau\DEF\tau|_{{\mathcal{P}}}^{\mathstrut}$
and $\btau\DEF\tau|_{{\mathcal{Q}}}^{\mathstrut}$; one gets
$$
\Gamma(\q)=\frac{d\tau}{d\btau}\cdot\Gamma(\pp)+
\frac{d}{d\btau}\!\ln\frac{d\tau}{d\btau}\,.
$$
Substituting here the group transformations
$\btau=\frac{\alpha\,\tau+\beta}{\gamma\,\tau+\delta}$, we arrive at
formula \eqref{Gauto}.
\end{proof}

\begin{remark}\label{R1}
Relations \eqref{tmp} and \eqref{Gauto} are the properties in their
own rights. The first one is local and defines the object
$\Gamma(\pp)$: the two functions $\Gamma$, $\widetilde\Gamma$ are
evaluated at one point $\pp$. The second property is nonlocal: one
function representing the object itself  is evaluated at two
$\tau$-points. Obviously, the covariant differentiation of the
$k$-differentials above is defined as follows \cite{dubrovin}
$$
\nabla f(\tau)=\frac{d}{d\tau}f(\tau)-k\,\Gamma(\tau)\,f(\tau)\,.
$$
In the case of modular subgroups of the group $\bo{\Gamma}(1)$ the
functions possessing the formal property \eqref{Gauto} are sometimes
called the quasi-modular forms \cite{maier,zagier}.
\end{remark}

\section{Construction of connections}

\subsection{Zero genus orbifolds}

Let us consider first the case of genus zero. It is clear, that
nontrivial theory appears only if the zero genus sphere
$\R=\mathbb{C}\mathrm{P}^1$ is endowed with points wherein this $\R$
ceases to be a manifold with a trivial fundamental group and each of
these points determines a finite order element of the fundamental
group $\pi_1^{}$ (the conical singularity) or a group element of
infinite order (the puncture). We thus have to consider an
$N$-punctured sphere, which is an orbifold $\RT$ with the generic
group $\pi_1^{}(\RT)=\langle\mathfrak{a}_1^{},\ldots,
\mathfrak{a}_{N\sm1}^{}\rangle$, where $(\mathfrak{a}_s)^{p_s^{}}=1$
and integral $p$'s are formally allowed to be equal to $\infty$. In
this case the theory is described by a Fuchsian equation with
rational coefficients of the following form:
\begin{equation}\label{Qx}
\begin{aligned}
\Psi_{\!\!\mathit{xx}}&=\frac12\,\bcQ(x)\,\Psi\\
&=\frac14
\mbig[8]\{\frac{p_s^{\sm2}-1}{(x-e_s)^2}+\cdots\mbig[8]\}\!\Psi\,,
\end{aligned}
\end{equation}
where $x\in\RT=\overline{\mathbb{C}}\backslash\{e_1^{},\ldots,e_{{
N}\sm1}^{},\infty\}$ and accessory parameters (hidden in dots) have
been chosen such that the monodromy representation of
$\pi_1^{}(\RT)$ be a first kind Fuchsian group \cite{ford}.

Since this base $\RT$ is noncompact, every holomorphic vector bundle
on it is trivial \cite{forster}.  On the other hand, the map
$x=\vphi(\tau)$ between $\RT$ and its fundamental polygon for
$\pi_1^{}(\RT)$ is  single-valued in both directions; $\vphi$ is a
Hauptmodul. Hence we may take $x$ as the global coordinate on $\RT$,
think of it as a flat one  $\tilde\tau\DEF x$, and consider in this
(old) coordinate the zero connection:
$\widetilde\Gamma(\pp)\equiv0$. For the new (non-flat) coordinate
$\tau$ we therefore have, according to the law \eqref{GP},
$$
0=\frac{1}{\dot x}\,\Gamma(\pp)-\frac{d}{dx}\!\ln \dot x
\qquad\hence\qquad
\Gamma(\tau)=\frac{d}{d\tau}\!\ln\dvphi(\tau)\,;
$$
whence it follows that such a $\Gamma$ comes from a scalar function
on $\RT$. Since the difference of any two connections
$\Gamma-\Gamma'$ on $T^*(\RT)$ is a differential, we obtain the
following property.

\begin{proposition}\label{P2}
Uniformizing representations to the analytic everywhere holomorphic
connections on $T^*(\RT)$ for a zero genus orbifold $\RT$ are
determined by its Hauptmodul $x=\vphi(\tau)$\/$:$
\begin{equation}\label{Gx}
\Gamma(\tau)=\frac{d}{d\tau}\!\ln\dvphi(\tau)+R\big(\vphi(\tau)\big)
\dvphi(\tau)\,,
\end{equation}
where $R$ is a holomorphic function on $\RT$.
\end{proposition}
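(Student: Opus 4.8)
The plan is to prove Proposition~\ref{P2} in two stages: first establish that the specific function $\Gamma_0(\tau)\DEF\frac{d}{d\tau}\ln\dvphi(\tau)$ is indeed a holomorphic connection on $T^*(\RT)$, and then characterize the full space of such connections by adding the most general differential. The first stage has already been done in the computation immediately preceding the statement: taking the flat coordinate $\tilde\tau\DEF x$ with zero connection $\widetilde\Gamma\equiv0$ and applying the transformation law \eqref{GP} yields $\Gamma(\tau)=\frac{d}{d\tau}\ln\dvphi(\tau)$. So I would begin by emphasizing that this $\Gamma_0$ is a bona fide connection because it arises from a genuine connection (the trivial one) in an admissible coordinate, and that the triviality of bundles on the noncompact $\RT$ \cite{forster} guarantees such a flat coordinate exists globally via the Hauptmodul $x=\vphi(\tau)$.

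The core of the argument is the classical affine structure of the space of connections: any two connections differ by a tensorial object of the appropriate type. First I would recall that if $\Gamma$ and $\Gamma'$ are two connection forms on $T^*(\RT)$, then their difference transforms, under \eqref{GP}, with the inhomogeneous term $\frac{d}{d\tilde\tau}\ln\frac{d\tau}{d\tilde\tau}$ cancelling, so that $\Gamma-\Gamma'$ obeys the homogeneous weight-2 rule—precisely the transformation law of a holomorphic $1$-form (an Abelian differential) on $\RT$. Thus the set of holomorphic connections is an affine space modeled on the vector space of holomorphic differentials. Next I would identify this space of differentials explicitly: on the orbifold $\RT=\overline{\mathbb{C}}\backslash\{e_1,\ldots,e_{N-1},\infty\}$ every meromorphic (indeed holomorphic on $\RT$) $1$-form can be written as $\omega=R(x)\,dx$ with $R$ a function holomorphic on $\RT$, and passing to the uniformizer via $dx=\dvphi(\tau)\,d\tau$ gives the $\tau$-representation $R(\vphi(\tau))\,\dvphi(\tau)$.

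Combining the two stages, I would write the general holomorphic connection as $\Gamma=\Gamma_0+(\Gamma-\Gamma_0)$, where the difference is a holomorphic differential of the form just described, which yields exactly \eqref{Gx}:
\begin{equation*}
\Gamma(\tau)=\frac{d}{d\tau}\ln\dvphi(\tau)+R\big(\vphi(\tau)\big)\dvphi(\tau)\,.
\end{equation*}
The step I expect to require the most care is the precise claim ``holomorphic on $T^*(\RT)$'': one must check that $R(\vphi(\tau))\dvphi(\tau)$ is holomorphic as a differential on the orbifold, meaning $R$ is allowed poles nowhere on $\RT$ but the behavior at the orbifold points $e_s$ and at the puncture $\infty$ must be compatible with the cone/cusp structure encoded in \eqref{Qx}. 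In other words, the main obstacle is not the affine-space bookkeeping but verifying that the word ``holomorphic'' is matched consistently on both sides—that $\Gamma_0$ itself has the admissible (logarithmic) singularity structure at the ramification and puncture points, and that adding $R(\vphi)\dvphi$ for $R$ holomorphic on $\RT$ preserves exactly that class. I would therefore close by remarking that the local behavior near each $e_s$ and near $\infty$ follows from the indicial exponents $\frac{p_s^{-2}-1}{}$ read off from \eqref{Qx}, which pin down the singularities of $\dvphi$ and hence certify that \eqref{Gx} exhausts the holomorphic connections.
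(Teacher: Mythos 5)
Your proposal is correct and takes essentially the same approach as the paper: the particular connection $\frac{d}{d\tau}\!\ln\dvphi(\tau)$ is obtained from the zero connection in the flat coordinate $x$ via the transformation law \eqref{GP}, and the general holomorphic connection differs from it by a holomorphic differential $R(\vphi)\,\dvphi$, since the difference of two connections transforms as a $1$-form. The only divergence is your closing concern about the local behaviour at $e_s$ and $\infty$, which the paper sidesteps because those points are excluded from $\RT$ by definition, so ``$R$ holomorphic on $\RT$'' already carries an unambiguous meaning there.
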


Because we are interested in the \textit{meromorphic} analysis on
Riemann surfaces and orbifolds we handle only with meromorphic, \ie,
Abelian differentials and, therefore, put the function $R$ to be a
rational one on $\overline{\mathbb{C}}$ with poles at points
$\{e_k^{},\infty\}$ at most. It defines a holomorphic differential
$R(x)\,dx$ on $\RT$. If we allow for $R(x)$ to have poles on $\RT$
then one can speak of meromorphic connections.

Another kind arguments for construction of the connection above uses
the fact that for a \mbox{1-di}\-men\-sional case the curvature of
an analytic connection is an identical zero and we may look for a
covariantly constant section $\psi$ (differential), that is
$$
\psi_\tau^{}=\Gamma(\tau)\,\psi\,,
$$
where $\Gamma(\tau)$ is as yet unknown. Differentials do certainly
exist and all of them are generated from one of them, say, $dx$ by
the formula above $R(x)\,dx$. Substituting here
$\psi=R(x)\,\dvphi(\tau)$ we arrive again at a formula of the form
\eqref{Gx}.

\subsection{Relation between arbitrary and zero genera}

The transition from the previous case of $g=0$ to the arbitrary
genera is based on the following extension of a result implicitly
formulated in \cite{whittaker}.

\begin{theorem}\label{P3}
For a compact Riemann surface defined by an arbitrary algebraic
curve \eqref{1} there exists a function field  $\mathbb{C}(x,y)$
generator pair $(z,w)$ such that one of the generators, \eg, $z$ has
a zero genus automorphism group $\mathrm{Aut}(z(\tau))\FED\G_z$ and
is determined by a Fuchsian equation with rational
coefficients\/\footnote{To avoid lengthening terminology we call the
linear equations \eqref{Q} and their nonlinear counterparts
\eqref{main} Fuchsian.}
\begin{equation}\label{Q38}
[z,\tau]=-\frac38
\mbig[9]\{ \sum_{s=1}^{2n+1}\frac{1}{(z-E_s)^2}-
\frac{2\,n\,z^{2n-1}+A(z)}{(z-E_1)\cdots(z-E_{2n+1})}\mbig[9]\},
\end{equation}
where $A(z)$ is a properly  chosen accessory polynomial of degree
$2\,n-2$.
\end{theorem}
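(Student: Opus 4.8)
The plan is to read \eqref{Q38} backwards. Its right-hand side is exactly the rational coefficient $\bcQ(z)$ of an orbifold Fuchsian equation of the type \eqref{Qx} whose singular points are $E_1,\dots,E_{2n+1}$ together with $z=\infty$, each carrying conical order $p_s=2$; indeed the numerical coefficient $-\tfrac38$ is the specialization of $\tfrac12\big(p^{-2}-1\big)$ at $p=2$. The underlying base is therefore the genus-zero orbifold $\overline{\mathbb{C}}$ with $2n+2$ cone points of order two, whose canonical double cover has genus $n$. Accordingly I would split the argument into three parts: (i) produce, out of the field $\mathbb{C}(x,y)$, a generating pair $(z,w)$ for which $z$ exhibits $\R$ over precisely this orbifold, so that $\G_{z}=\mathrm{Aut}(z(\tau))$ is the genus-zero group generated by order-two elliptic elements with $\GR\subset\G_{z}$; (ii) show that this presentation forces the shape of \eqref{Q38}; and (iii) fix the accessory polynomial $A(z)$. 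Note that once $z$ is a Hauptmodul for such a $\G_z$, the genus-zero property is automatic, since the $\G_z$-automorphic functions constitute the rational field $\mathbb{C}(z)$.

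Granting the presentation from step (i), the form of \eqref{Q38} is essentially forced. Writing the coefficient of the invariant equation \eqref{main} for $z$ as its prescribed double poles plus a correction,
\[
[z,\tau]=\bcQ(z)=-\frac38\sum_{s=1}^{2n+1}\frac{1}{(z-E_s)^2}+\frac{B(z)}{(z-E_1)\cdots(z-E_{2n+1})},
\]
with $B$ a polynomial, I would impose the two analytic conditions at $z=\infty$: that it be a cone point of order two (coefficient $-\tfrac38$ of $z^{-2}$) and that the equation remain Fuchsian there (no $z^{-1}$ term). Using $\sum_s(z-E_s)^{-2}=(2n+1)z^{-2}+O(z^{-3})$, these two requirements pin down the leading coefficient and the degree of $B$, yielding $B(z)=\tfrac38\big(2n\,z^{2n-1}+A(z)\big)$ with $\deg A\le 2n-2$. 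This reproduces \eqref{Q38} verbatim.

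For step (iii) I would first check the bookkeeping: a second-order Fuchsian equation with $2n+2$ regular singular points on the sphere carries $(2n+2)-3=2n-1$ accessory parameters once the local exponents are fixed, which matches exactly the $2n-1$ coefficients of a polynomial $A$ of degree $2n-2$. I would then determine these constants by the defining condition of uniformization: the projective monodromy of \eqref{Q38}, read off from the ratio \eqref{tau} of two solutions, must be conjugate to the first-kind Fuchsian group $\GR$ representing $\pi_1^{}(\R)$, so that inverting that ratio returns the automorphic Hauptmodul $z=\vphi(\tau)$.

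The genuine obstacle lies in two places. The first is step (i): producing, for a curve of \emph{arbitrary} genus, a generator whose branch locus consists solely of order-two points, i.e.\ extending Whittaker's hyperelliptic construction \cite{whittaker} to the general case and verifying that $(z,w)$ indeed generate $\mathbb{C}(x,y)$; this is the structural heart of the theorem and is exactly where the passage beyond the hyperelliptic setting must be justified. The second is step (iii): the accessory-parameter problem is not algebraic, so I would establish existence of a suitable $A(z)$ not by computation but from the Poincar\'{e}--Koebe uniformization of the orbifold, which supplies a unique equation with the required discrete monodromy, while noting that the coefficients of $A$ depend transcendentally on the moduli $E_s$ and admit no closed form in general. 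The forced-shape derivation of the middle step, by contrast, I expect to be routine.
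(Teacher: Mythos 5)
Your outline follows the same route as the paper: realize $\R$ over a genus-zero orbifold with $2n+2$ cone points of order two, read the shape of \eqref{Q38} off the prescribed local exponents (your count of $2n-1$ accessory parameters against the $2n-1$ coefficients of $A$ is a sanity check the paper does not spell out), and fix $A(z)$ by the Klein--Poincar\'e existence theorem. Steps (ii) and (iii) are sound in substance. But you explicitly defer step (i), and that step is essentially the entire content of the paper's proof, so the proposal has a genuine gap exactly where you locate ``the structural heart.'' The paper closes it in two moves. First, a birational change of generators always brings \eqref{1} to a model $\widetilde F(z,w)=0$ in which the multivalued function $w(z)$ has only simple square-root branch points $z=E_s$ (with an even number of them, one placed at $z=\infty$); no extension of Whittaker's hyperelliptic construction is required --- this is the classical normal form of an algebraic function under a generic projection, and the field identity $\mathbb{C}(z,w)=\mathbb{C}(x,y)$ is automatic from birationality. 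Second --- and this is the idea absent from your write-up --- once \eqref{Q38} is solved, the coefficient $-\frac38$ forces the local behavior $z=E_s+\tau^2+\cdots$ at the cone points and $z=\tau^{-2}+\cdots$ at infinity, so the square-root branching of $w$ over each $E_s$ is resolved and $w(\tau)=w(z(\tau))$ becomes single-valued on $\Hp$ for free. That function-theoretic observation is what produces the second generator and the conformal $\Hp$-image of the curve; one never needs to exhibit $\R$ as an orbifold cover a priori, which is the group-theoretic formulation you were struggling to justify.

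A smaller but real defect: in step (iii) you ask that the projective monodromy of \eqref{Q38} be conjugate to $\GR$. It is not. The monodromy of the rational equation \eqref{Q38} is the genus-zero group $\G_z$ generated by order-two elliptic elements, of which $\GR$ is only a finite-index subgroup --- indeed that inclusion is the whole point of the theorem, and you state it correctly in your step (i). The accessory polynomial is determined by requiring this monodromy to be a first-kind Fuchsian group uniformizing the coned sphere, which is precisely what Klein--Poincar\'e supplies; it is not determined by matching the monodromy to $\pi_1^{}(\R)$.
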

\begin{proof}
By an appropriate birational substitution
$(x,y)\rightleftarrows(z,w)$ one can always transform the curve
\eqref{1} into a nonsingular form $\widetilde F(z,w)=0$ having only
the simple branch points $z=E_s$. This means that multi-valued
function $w=w(z)$  has a local ramification structure of the form
$(w-w_s)^2=a_s\,(z-E_s)+\cdots$ for all $E$'s with $a_s\ne 0$ and
holomorphic otherwise: $w-w_0^{}=b_0^{}(z-z_0^{})+\cdots$ under
$z_0\ne E_s$. The number of $E$-points is always even (denote it as
$2\,n+2$) and we can put one of them at $z=\infty$. Consider
Fuchsian equation \eqref{Q38}. The Klein--Poincar\'e theorem
\cite{ford} states that there is a unique $A(z)$-polynomial such
that this equation determines the globally single-valued on $\Hp$
analytic function $z=z(\tau)$. The coefficient $-\frac38$ in
\eqref{Q38} says that $z(\tau)$ has the following local behavior in
neighborhoods of $E$'s: $z=E+\tau^2+\cdots$. At infinity we have the
development $z=\tau^{\sm2}+\cdots$. Therefore function
$w(\tau)=w(z(\tau))$ is everywhere single-valued as well.
Eq.~\eqref{Q38} has no other singularities except $\{E_s\}$ and,
hence, in neighborhoods of the regular points
$(z_0^{},w_0^{})\in\widetilde F$ we have the developments
$$
z=z_0^{}+A\,(\tau-\tau_0^{})+\cdots\,,\qquad
w=w_0^{}+B\,(\tau-\tau_0^{})+\cdots\,,
$$
which are holomorphic. Since the pair $(z,w)$ has been obtained with
the help of birational transformation, the functions $z=R_1(x,y)$
and $w=R_2(x,y)$ form just a different pair of generators of the
function field: $\mathbb{C}(x,y)=\mathbb{C}(z,w)$. We thus have a
complete conformal purely hyperbolic $\Hp$-image $\big(z(\tau),
w(\tau)\big)$ of \eqref{1}: $\R=\Hp\!/\GR$.
\end{proof}

From the group-algebraic viewpoint this theorem means that the exact
matrix representation of $\pi_1^{}(\R)=\GR$ for a compact Riemann
surface defined by \eqref{1} may be represented as an intersection
of different monodromy group pairs
\begin{equation}\label{GR}
\GR=\G_x\mbox{\large$\cap$}\,\G_y=
\G_z\mbox{\large$\cap$}\,\G_w=\cdots
\end{equation}
and one of the generators here (say, $z$) has a zero genus monodromy
$\G_z$; this is not obvious a priori. To put it differently, when
describing purely hyperbolic higher genera Riemann surfaces  the
\emph{zero genus} orbifolds do always appear and their structure
properties are determined by \emph{ `rational'} Fuchsian equations.
(Recall that zero genus monodromy group may correspond both to a
rational and to an algebraic Fuchsian equation). Hence, the function
elements $z$ with a zero genus automorphism $\mathrm{Aut}(z(\tau))$
do certainly exist and there are no reasons to ignore such objects
when constructing the effective theory of the $\R$ itself. A simple
explanation here is the fact that wider groups are easier described
and this point should certainly be exploited in the theory.

Equation \eqref{Q38} determines the function $z(\tau)$ which,
besides being a defining Hauptmodul for a zero genus orbifold
$\RT_z$, is a function element $z=R_1(x,y)$ of the `nonzero genus
field' $\mathbb{C}(x,y)$ of rational functions on \eqref{1}. Hence
it follows that whatever compact $g>1$ Riemann surface $\R$ may be
there always exist associated zero genus orbifolds whose Hauptmoduln
are elements of the function field $\mathbb{C}(x,y)$. By this means
we can construct connections on nonzero genera $\R$'s using
connections on $T^*(\RT_z)$. The only thing we need is to satisfy
the automorphic property \eqref{Gauto} for the full group $\GR$.

\subsection{Excessive automorphisms}

Let us take a connection on $T^*(\RT_z)$ defined by formula
\eqref{Gx}
$$
\Gamma(\tau)=\frac{d}{d\tau}\!\ln\dot z(\tau)
$$
and add to it an arbitrary Abelian differential on the curve
\eqref{1}:
\begin{equation}\label{Gz}
\Gamma(\tau)=\frac{d}{d\tau}\!\ln\dot z(\tau)+R(z,w)\,\dot z(\tau)\,.
\end{equation}
In this way we obtain the desired connection on $T^*(\R)$ because
automorphic property \eqref{Gauto} for the object \eqref{Gz} does
certainly hold for the group $\GR$ if $R(z,w)$ is not a function of
$z$ alone; all the transformations from group
$\G_z\mbox{\large$\cap$}\,\G_w$ fit into the formula \eqref{Gauto}.
This condition on function $R(z,w)$ is of course necessary  but not
sufficient because the  rule \eqref{Gauto} may take place for a
group that may be wider than $\GR$; just as group $\G_z$ may be
wider than group $\GR$.

Such an `excessive' extension may occur when constructing
differentials and even functions: different differentials on one
orbifold may possess the automorphic property \eqref{diff} with
respect to different groups forming towers of subgroups. For
example, Whittaker's family of curves $w^2=z^{2g+1}+1$ (the
classical Weierstrass case $g=1$ is not an exception) provides
counterexamples when both the generators $z$, $w$ and the base
differentials $dz$ and $dw$ have automorphic properties
\eqref{scalar}--\eqref{diff} with respect to groups that are larger
than $\GR$. In all these cases genera of $\G_z$ and $\G_w$ are equal
to zero and groups $\G_w$ are even the simple triangle ones:
$$
[z,\tau]=-\frac38\,
\frac{z^{2g+1}-4\,g(g+1)}{(z^{2g+1}+1)^2}\,z^{2g-1}\,,\qquad
[w,\tau]=-\frac{2\,g\,(g+1)}{(2\,g+1)^2}\,
\frac{w^2 + 3}{(w^2-1)^2}\,.
$$
These equations are solvable in terms of hypergeometric
${}_2F_1$-functions \cite{whittaker3,dalzell}. Concerning functions,
one view on this problem is a tessellation of a Fuchsian
$\R$-polygon on the schlicht (univalent) domains of a given
$u(\tau)$-function. Surprisingly, in such a formulation the problem
is not elaborated even in the elliptic $g=1$ case. In general we
should involve, according to \eqref{GR}, automorphic objects with
both the generating groups $\G_z$ and $\G_w$. Hence the problem
above can be reduced to a problem of searching for an automorphic
function $u(\tau)=R\mbig[1](z(\tau),w(\tau)\mbig[1])$ whose
automorphism group $\G_u$ coincides with  group
$\GR=\G_z\mbox{\large$\cap$}\,\G_w$. Complexity of the problem is
related to the theory of Fuchsian equations with algebraic
coefficients; the latter has almost not been developed.

\begin{remark}
To avoid lengthening terminology we shall apply the notion
automorphism $\G$ not only to functions (scalars) but to
differentials and connections as well in the sense that the rules
\eqref{diff} and \eqref{Gauto} respect the group $\G$. This
automorphism group is assumed, by definition, to be a \emph{maximal}
set of group elements under which the corresponding transformation
laws hold.
\end{remark}

\begin{proposition}\label{C1}
Uniformizing representations for meromorphic differentials on $\R$
and connections on $T^*(\R)$  lift to differentials and connections
for manifolds $\skew3\widetilde{\R}$ that cover
$($finitely-sheeted\/$)$ $\R$.
\end{proposition}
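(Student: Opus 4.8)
The plan is to reduce the statement to the elementary observation that a finitely-sheeted cover corresponds to a finite-index subgroup of the uniformizing group, and that the automorphic transformation laws are inherited verbatim by subgroups.

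First I would recall the covering-space dictionary. A finitely-sheeted cover $\pi:\widetilde{\R}\to\R$ of $\R=\Hp/\GR$ corresponds, by the Galois correspondence for covers, to a subgroup $\G'\le\GR$ of finite index $[\GR:\G']$ equal to the number of sheets, with $\widetilde{\R}=\Hp/\G'$ sharing the \emph{same} universal cover $\Hp$. Both uniformizations are realized on one and the same half-plane, and $\pi$ lifts to the identity map of $\Hp$. When $\R$ is an orbifold the same correspondence holds for its orbifold fundamental group, so the conical/puncture data cause no difficulty.

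Next I would carry out the core step. By construction the $\tau$-representation $\psi(\tau)$ of a meromorphic differential on $\R$ is a concrete meromorphic function on $\Hp$ obeying the weight-$2$ law \eqref{diff} for \emph{every} $\big(\begin{smallmatrix}\alpha&\beta\\ \gamma&\delta\end{smallmatrix}\big)\in\GR$, and the representation $\Gamma(\tau)$ of a connection obeys \eqref{Gauto} for every element of $\GR$. Since $\G'\subset\GR$ as a set, both identities hold \emph{a fortiori} for every element of $\G'$, and with the identical functional form: the derivation of \eqref{Gauto} from \eqref{GP} in Theorem \ref{T1} uses only that the substituted transformation lies in the group, not any property special to the full $\GR$. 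The local tensor factors (such as $d\tau/d\tilde\tau$ in \eqref{GP}, and the analogous factor for differentials) involve only coordinate changes on $\Hp$ and are untouched; only the \emph{global} automorphy group is relaxed from $\GR$ to $\G'$. Consequently the very same function on $\Hp$ qualifies as an admissible $\tau$-representation of a meromorphic differential (resp.\ connection) on $\widetilde{\R}=\Hp/\G'$, which is precisely the assertion.

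Finally I would record the geometric content: this lift is nothing but the pullback $\pi^{*}$ along the covering map. Because $\pi$ covers the identity of $\Hp$, the $\tau$-representations before and after pullback literally coincide, so meromorphy---being a local analytic property on $\Hp$---is automatically preserved and poles are carried to the points lying over them. I do not expect a genuine analytic obstacle here; the only thing that must be checked is the group-theoretic correspondence (covers $\leftrightarrow$ finite-index subgroups), which is standard covering theory. If anything, the discussion of excessive automorphisms above makes the claim even more transparent: an object may already be automorphic under a group \emph{wider} than $\GR$, so its invariance under a mere subgroup is immediate.
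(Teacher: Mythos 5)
Your proposal is correct and follows essentially the same route as the paper, whose entire proof is the one-line observation that the fundamental group of a covering manifold is a subgroup of that of the base; your elaboration (finite-index subgroup sharing the universal cover $\Hp$, transformation laws \eqref{diff} and \eqref{Gauto} restricting \emph{a fortiori} to the subgroup) simply makes explicit what the paper leaves implicit.
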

\begin{proof}
Fundamental group of a covering manifold is a subgroup of the one
being covered \cite{novikov}.
\end{proof}

Thus, since every group can be embedded in a larger group, the
problem consists in elimination of the objects (functions,
differentials, and connections) having wider automorphisms than
exact representations of $\pi_1^{}(\R)$. Fortunately, there is an
algorithmic solution in the case of hyperelliptic curves.

\subsection{Hyperelliptic case}

The aim of this section is to derive a class of algebraic
dependencies \eqref{1} for which the connection object
$\Gamma(\tau)$ is explicitly constructed and whose automorphism
coincides with group $\GR$. By formula \eqref{Gz} any connection is
built by a logarithmic derivative of a differential and we begin
with differentials, namely, non-exact differentials because
functions (exact differentials) may have wider automorphisms than
$\GR$.

Let $d\,\mathfrak{A}=R(x,y)\,d\u$, where $\u$ is a holomorphic
differential, be an Abelian differential whose $\tau$-representation
$\dot{\mathfrak{A}}(\tau)$ (formula \eqref{diff})  respects the
group $\G_{\mathfrak{A}}\supset\GR$ with finite index
$|\G_{\mathfrak{A}}:\GR|\ne1$. In other words, assume that in
addition to transformations
$\big(\begin{smallmatrix}\alpha&\beta\\
\gamma&\delta\end{smallmatrix}\big)\in\GR$ there exists a
transformation
$\big(\begin{smallmatrix}\balpha&\bbeta\\
\bgamma&\bdelta\end{smallmatrix}\big)\notin\GR$ such that
\begin{equation}\label{A}
(\balpha\,\bdelta-\bbeta\,\bgamma)
\,R\mbig[7](\!\vphi\Big(\Mfrac{\balpha\,\tau+\bbeta}
{\bgamma\,\tau+\bdelta}\Big),
\vpsi\Big(\Mfrac{\balpha\,\tau+\bbeta}
{\bgamma\,\tau+\bdelta}\Big)\!\mbig[7])
\dot\u\Big(\Mfrac{\balpha\,\tau+\bbeta}
{\bgamma\,\tau+\bdelta}\Big)=
(\bgamma\,\tau+\bdelta)^2\,
R\mbig[1](\vphi(\tau),\vpsi(\tau)\mbig[1])\,\dot\u(\tau)\,.
\end{equation}
Let us analyze location of zeroes/poles of the differential because
they are the well-defined and generate infinities of $\Gamma$. The
latter in turn are the only well-defined objects for connections.
Denote by $\tau_0^{}$ coordinate of one of the zeroes
$\pp_{\!\!0}^{}$: $\dot{\mathfrak{A}}(\tau_0^{})=0$. This zero
(right hand side of \eqref{A}) may get mapped into a zero of the
$\big(\begin{smallmatrix}
\balpha&\bbeta\\
\bgamma&\bdelta\end{smallmatrix}\big)$-transformed holomorphic part
$\dot\u$, of the meromorphic one $R$, or into both of them (left
hand side of \eqref{A}). Hence `rational' part of $d\,\mathfrak{A}$
can take up zeroes of the holomorphic one $\dot\u$ and generate  new
zeroes. In order to get around such `intertwining and canceling' the
zeroes/poles of $\dot\u$ and $R$ we drop out the rational part
$R(x,y)$ and consider only the holomorphic differential:
$$
(\balpha\,\bdelta-\bbeta\,\bgamma)
\,\dot\u\mbig[7](\Mfrac{\balpha\,\tau+\bbeta}
{\bgamma\,\tau+\bdelta}\mbig[7])=
(\bgamma\,\tau+\bdelta)^2\,
\dot\u(\tau)\,.
$$
It follows that $\tau'=\frac{\balpha\,\tau_0^{}+\bbeta}
{\bgamma\,\tau_0^{}+\bdelta}$ also represents a zero:
$\dot\u(\pp')=0$. Clearly, $\pp'$ may not be a zero being
\mbox{$\GR$-equivalent} of $\pp_{\!\!{\s0}}$; for we should
otherwise have
$\mbig[3](\begin{smallmatrix}\balpha&\bbeta\\
\bgamma&\bdelta\end{smallmatrix}\mbig[3])=
\mbig[3](\begin{smallmatrix}\alpha&\beta\\
\gamma&\delta\end{smallmatrix}\mbig[3])$. Therefore $\tau'$ may only
be an image of another zero and analysis is complicated if $\dot\u$
has several zeroes. On the other hand, if we take $\dot\u$ having a
single zero, its position should be preserved by the
$\mbig[3](\begin{smallmatrix}\balpha&\bbeta\\
\bgamma&\bdelta\end{smallmatrix}\mbig[3])$-transformation and it
must be an elliptic one. The existence of differentials with
elliptic automorphisms on purely hyperbolic $\GR$ follows from
Theorem~\ref{P3}; whence it follows that the rational differential
$R(z)\,\dot z$ has even the order two automorphisms at all the
points $E$'s. It has however excessive zeroes so we drop out its
rational part $R$ and, in order to reduce the zeroes, divide $\dot
z$ by a function having simple zeroes at $E$'s, \ie, by
$\prod_{E}\!\sqrt{z-E}$. One kind of algebraic curves for which this
is possible suggests itself: this is the hyperelliptic class
\begin{equation}\label{hyper}
w^2=(z-E_1)\cdots(z-E_{2g+1})\,,\qquad \FED P(z)
\end{equation}
with parametrization $z=\vphi(\tau)$, $w=\vpsi(\tau)$. We put in
this case
\begin{equation*}\label{infty}
\dot\u=\frac{\dvphi}{\vpsi}
\end{equation*}
and this differential, being holomorphic, has the only zero at
$\pp_{\!\!\s0}=(\infty,\infty)$. Renormalizing the $\tau$-plane, we
can assign $\tau_0^{}=0$ to this point and impart the form
$\tau\mapsto \varepsilon\tau$ to the elliptic transformation, that
is
$\mbig[3](\begin{smallmatrix}\balpha&\bbeta\\
\bgamma&\bdelta\end{smallmatrix}\mbig[3])=
\mbig[3](\begin{smallmatrix}\varepsilon&0\\
0&1\end{smallmatrix}\mbig[3])$,  where $\varepsilon^n=1$. The case
under consideration  corresponds to  $n=2$ and hence $z=\vphi(\tau)$
is an even function and $w=\vpsi(\tau)$ is an odd one:
$$
\vphi(-\tau)=\vphi(\tau)\,,\qquad
\vpsi(-\tau)=-\vpsi(\tau)\,.
$$
This is nothing but the hyperelliptic involution  $(z,w)\mapsto
(z,-w)$ (see also \cite{whittaker}). Now, let us build the following
connection:
\begin{equation}\label{Gu}
\Gamma(\tau)=\frac{d}{d\tau}\!\ln\dot\u(\tau)=
\frac{d}{d\tau}\!\ln\frac{\dvphi}{\vpsi}\,.
\end{equation}
Compact Riemann surfaces are however purely hyperbolic manifolds but
this connection admits our elliptic transformation in the sense that
\eqref{Gu} transforms like differential $\dvphi$:
$\dvphi(-\tau)=-\dvphi(\tau)$. Indeed,
$$
\Gamma(\tau)=\frac{\,\ddot{\!\vphi}}{\dvphi}-
\frac{\,\,\dot{\!\!\vpsi}}{\vpsi}=
\frac{\,\ddot{\!\vphi}}{\dvphi}-\frac12\,P'(\vphi)
\frac{\dvphi}{\vphi}
$$
and this $\Gamma(\tau)$ has the same parity as $\dvphi$. Moreover,
any connection built by arbitrary hyperelliptic differential
respects a wider automorphism:
$$
\Gamma=\frac{d}{d\tau}\!\ln\!R(z)\,\frac{\dot z}{w}=
\frac{d}{d\tau}\!\ln\!R(z)\,\dot z-
\frac12\frac{d}{d\tau}\!\ln w^2=
\frac{d}{d\tau}\!\ln\!R(z)\,\dot z-\frac12\,\sum_E(z-E)^{\sm1}
\cdot\dot z\,,
$$
which is  actually a connection on orbifold $\Hp\!/\G_z$. Thus, in
order to avoid  excessive elliptic automorphisms we should add to
connection \eqref{Gu} any pole-free (nonzero) differential
insensitive to permutation of sheets. Clearly, this is done by a
holomorphic differential:
\begin{equation}\label{final}
\Gamma(\tau)=\frac{d}{d\tau}\!\ln\frac{\dvphi}{\vpsi}+
(c_1^{}+c_2^{}\vphi+\cdots +c_g^{}\vphi^{g\sm1})\,
\frac{\dvphi}{\vpsi}\,.
\end{equation}
We thus have arrived at the following result.

\begin{theorem}
Let $\GR$ be a matrix representation of $\pi_1^{}(\R)$ for a
hyperelliptic Riemann surface \eqref{hyper}. Then connection
$\Gamma(\tau)$ defined by \eqref{final}, subjected to a condition
that at least one of $c_j^{}\ne0$, has an automorphism $\GR$ and
represents a connection object on $T^*(\R)$ with a single pole at
infinite point $\pp_{\!\s0}=(\infty,\infty)$.
\end{theorem}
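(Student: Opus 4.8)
The plan is to establish the three assertions---automorphy for all of $\GR$, maximality of that automorphism, and the single-pole structure---by splitting \eqref{final} into the base connection \eqref{Gu} and a holomorphic correction,
$$
\Gamma=\Gamma_0+\omega\,,\qquad
\Gamma_0\DEF\frac{d}{d\tau}\ln\frac{\dvphi}{\vpsi}\,,\qquad
\omega\DEF\big(c_1^{}+\cdots+c_g^{}\vphi^{g\sm1}\big)\frac{\dvphi}{\vpsi}\,.
$$

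To obtain the inclusion $\GR\subseteq\mathrm{Aut}(\Gamma)$ I would first record that $\Gamma_0=\frac{d}{d\tau}\ln\dot\u$ is the logarithmic derivative of the Abelian differential $\dot\u=\dvphi/\vpsi=dz/w$ on \eqref{hyper}, whose $\tau$-representation obeys the weight-$2$ law \eqref{diff}; the short computation differentiating $\psi(\tilde\tau)=(\gamma\tau+\delta)^2\psi(\tau)$ then reproduces exactly \eqref{Gauto} for $\Gamma_0$, as in Theorem~\ref{T1} and Proposition~\ref{P2}. The correction $\omega$ is the generic holomorphic differential on the hyperelliptic curve---the span of $z^{k\sm1}dz/w$, $k=1,\dots,g$---so it too transforms by \eqref{diff} for $\GR$. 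Since a weight-$2$ summand adds homogeneously and leaves the inhomogeneous term $2\gamma(\gamma\tau+\delta)$ of \eqref{Gauto} untouched, the sum $\Gamma=\Gamma_0+\omega$ again satisfies \eqref{Gauto} for the whole of $\GR$.

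The step I expect to be the main obstacle is maximality: showing that no element outside $\GR$ survives. Here I would use the parity analysis established just before the statement. For a differential with a single zero the only admissible excess is an elliptic element fixing that zero (an image of the zero that is not $\GR$-equivalent to it would force a coincidence of distinct matrices); as $\dot\u$ was arranged to vanish only at $\pp_{\!\s0}$, the unique candidate is the order-two involution $\sigma:\tau\mapsto-\tau$ implementing $(z,w)\mapsto(z,-w)$, whose existence on the purely hyperbolic $\GR$ is guaranteed by Theorem~\ref{P3}. Specializing \eqref{Gauto} to $\sigma$ shows that a connection respects $\sigma$ precisely when it is odd in $\tau$. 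From $\vphi(-\tau)=\vphi(\tau)$ and $\vpsi(-\tau)=-\vpsi(\tau)$ one finds $\Gamma_0$ odd---so it does carry this spurious symmetry---while $\omega$ is even and nonvanishing as soon as some $c_j^{}\ne0$. Consequently $\Gamma=\Gamma_0+\omega$ is neither even nor odd, fails \eqref{Gauto} for $\sigma$, and the only possible excess is broken; hence $\mathrm{Aut}(\Gamma)=\GR$.

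The pole count is then immediate. The differential $\omega$ is holomorphic throughout $\R$ and contributes no singularity, whereas $\dot\u$ is holomorphic with its single zero at $\pp_{\!\s0}$, so $\Gamma_0=\frac{d}{d\tau}\ln\dot\u$ has exactly one simple pole, located at $\pp_{\!\s0}$; therefore so does $\Gamma$. Assembling the three parts shows that \eqref{final} descends to a connection object on $T^*(\R)$ whose automorphism is $\GR$ and whose only pole sits at the infinite point $\pp_{\!\s0}=(\infty,\infty)$.
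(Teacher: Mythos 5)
Your proof is correct and follows essentially the same route as the paper's: $\GR$-automorphy holds by construction (a weight-$2$ summand added to the log-derivative connection), any excess automorphism is forced by uniqueness of the singularity to be the order-two elliptic (hyperelliptic) involution $\tau\mapsto-\tau$, and this is excluded because the two terms of \eqref{final} have opposite parities (odd and even). You merely make explicit two points the paper's proof leaves implicit---the verification that $\GR\subseteq\mathrm{Aut}(\Gamma)$ and the location of the single simple pole---so there is nothing substantive to add.
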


\begin{proof}
The transformation law \eqref{Gauto}, where $\GR=\G_z\cap\G_w$, is
satisfied by construction. Suppose that \eqref{final} respects a
`larger' factor topology on certain $\Hp\!\!/\G$:
\begin{equation}\label{Gp}
(\balpha\,\bdelta-\bbeta\,\bgamma)\,
\Gamma\!\mbig[7](\Mfrac{\balpha\,\tau+\bbeta}
{\bgamma\,\tau+\bdelta}\mbig[7])
=(\bgamma\,\tau+\bdelta)^2\cdot\Gamma(\tau)+
2\,\bgamma\,(\bgamma\,\tau+\bdelta)\,,\qquad
\mbig[3](\begin{smallmatrix}\balpha&\bbeta\\
\bgamma&\bdelta\end{smallmatrix}\mbig[3])
\in \G \supset \GR\,.
\end{equation}
Owing to uniqueness of singularities of $\Gamma(\tau)$, this
transformation must be an elliptic one of the 2nd order and we have
instead of \eqref{Gp}
$$
-\Gamma(-\tau)=\Gamma(\tau)\,.
$$
However this rule is not satisfied by expression \eqref{final}
because its two terms have opposite parities: the first one is odd
and the first one is even, a contradiction; the transformation
$\mbig[3](\begin{smallmatrix}\balpha&\bbeta\\
\bgamma&\bdelta\end{smallmatrix}\mbig[3])$ may only be an identical
one.
\end{proof}

\begin{remark}
Modifying the holomorphic differential
$$
\dot\u=(z-E_k)^{g\sm1}\,\frac{\dot z}{w}\,,
$$
we get connections with single singularities at points
$\pp_k=(E_k,0)$. Connections with single singularities (elementary,
in terminology of \cite{hawley}) may serve as building blocks for
construction of all the connections: by adding to them  meromorphic
differentials, one obtains connections with singularities at
prescribed points. Moreover all the $\Gamma$'s have an invariant
related to their poles. Indeed, integrating formula \eqref{GP}, we
derive
$$
\int_{\partial\R}\!\!\!\!\widetilde{\Gamma} (\tilde\tau)\,d\tilde\tau
=\int_{\partial\R}\!\!\!\!\Gamma(\tau)\,d\tau+\int_{\partial\R}
\!\!\!\!d\ln\frac{d\tau}{d\tilde\tau}=
\int_{\partial\R}\!\!\!\!\Gamma(\tau)\,d\tau
\qquad(\text{invariant})\,,
$$
since $\frac{d\tau}{d\tilde\tau}$ is everywhere holomorphic without
zeroes. Putting here $\Gamma=\frac{d}{d\tau}\!\ln\dot u(\tau)$ with
arbitrary differential $\dot u$, one can compute the value of this
invariant if $\R$ is a compact surface (not orbifold). In this case
difference between number of zeroes and poles of any differential
$\dot u$ is equal to $2\,g-2$ and we obtain that
$$
\int_{\partial\R}\!\!\!\!\Gamma(\tau)\,d\tau=(2\,g-2)\cdot
2\,\pi\ri\,.
$$
\end{remark}

\section{Differential properties of connections}

\subsection{ODEs for connections\label{odes}}

This is a classical result by Hurwitz that the automorphic weight-2
forms satisfy the third order \odes\ \cite{hurwitz,zagier}. These
forms are the standard objects in the complex analysis on $\R$'s and
in this section we shall show that the similar result takes place
for any connection object.

Let us consider the finite genus hyperbolic Riemann surface (or
orbifold) $\R$. Apart from algebraic equation \eqref{1} this object
is described by the Fuchsian equation for a meromorphic function
$u=R(x,y)$ on $\R$ whose uniformizing form $u(\tau)$ has an
automorphism $\G_u$ coinciding with $\pi_1^{}(\R)$. Corresponding
equation \eqref{main} does certainly exist because $\pi_1^{}$ for
such an object can always be realized as a monodromy of the 2nd
order linear Fuchsian \ode. We have
\begin{equation*}\label{Qu}
[u,\tau]=\bcQ(x,y)\,.
\end{equation*}
Construct first a connection $\Gamma(\tau)$ on $T^*(\R)$ with the
help of an exact differential $\dot u$:
$$
\Gamma=\frac{d}{d\tau}\!\ln\dot u
$$
and define the corresponding covariant differentiation of
$k$-differentials $\nabla=\partial_\tau-k\,\Gamma(\tau)$. Clearly,
$\nabla u=\dot u$ and $\nabla^2 u\equiv0$. Making use of the
property
$$
[u,\tau]\,\dot u^2=\frac{d}{d\tau}\!
\mbig[7]( \frac{d}{d\tau}\!\ln\dot u\mbig[7])-
\frac12\mbig[7](\frac{d}{d\tau}\!\ln\dot u\mbig[7])^{\!\!2},
$$
we observe that the object\footnote{The object $\ellK$ is an analog
of the standard curvature. Since $u$ may serve as a local coordinate
on $\R$, this connection $\Gamma$ and its `curvature' $\ellK$ may be
thought of as having almost everywhere zero values (in the flat
coordinate $u$) or as having the $\delta$-function like
distributions concentrated at a finite number of points wherein
$\dot u(\tau)=\{0,\infty\}$.}
\begin{equation}\label{K}
\ellK\DEF\dot{\mathrm{\Gamma}}-\frac12\,\Gamma^2
\end{equation}
is a 2-differential: $\ellK=\bcQ(x,y)\,(\nabla u)^2$. Taking the
$\nabla$-derivative, we have
$$
\nabla\ellK=\nabla\bcQ(x,y)\cdot(\nabla u)^2\,.
$$
The quantities $x(\tau)$ and $y(\tau)$ represent the scalar objects
on $\R$ (not only on $\Hp\!/\G_x$ or $\Hp\!/\G_y$;
Corollary~\ref{C1}), that is $\dot x=\nabla x$ and $\dot y=\nabla
y$. Hence we derive
$$
\nabla\ellK=(\bcQ_x\,\dot x+\bcQ_y\,\dot y)\cdot(\nabla u)^2=
\pow{F}{y}{\sm1}( F_y\bcQ_x-F_x\bcQ_y)\,\dot x\,\cdot(\nabla u)^2\,,
$$
where $\nabla\ellK\DEF\dot\ellK-2\,\Gamma\ellK$. The definition
$u=R(x,y)$ gives
$$
\dot x=\frac{F_y}{F_yR_x-F_xR_y}\,\nabla u
$$
and, consequently,
$$
\ellK=\bcQ\cdot(\nabla u)^2\,,\qquad
\nabla\ellK=\frac{F_y\bcQ_x-F_x\bcQ_y}{F_yR_x-F_xR_y}\cdot
(\nabla u)^3\,.
$$
Elimination of $\nabla u$ gives the identity
\begin{equation*}\label{N1}
\frac{(\nabla\ellK)^2}{\ellK^3}=
\frac{(F_y\bcQ_x-F_x\bcQ_y)^2}{(F_yR_x-F_xR_y)^2}\,\bcQ^{\sm3}\,.
\end{equation*}
The second $\nabla$-derivative yields
$$
\nabla^2\ellK=\frac{F_y}{F_yR_x-F_xR_y}
\,\frac{d}{dx}
\frac{F_y\bcQ_x-F_x\bcQ_y}{F_yR_x-F_xR_y}\cdot (\nabla u)^4\,,
$$
where $\nabla^2\ellK\DEF\big(\frac{d}{d\tau}-3\,\Gamma\big)
(\dot\ellK-2\,\Gamma\ellK)$. As before, elimination of $\nabla u$
produces the second scalar identity
$$
\frac{\nabla^2\ellK}{\ellK^2}=
\frac{F_y\bcQ^{\sm2}}{F_yR_x-F_xR_y}\,\frac{d}{dx}
\frac{F_y\bcQ_x-F_x\bcQ_y}{F_yR_x-F_xR_y}\,.
$$
As a result we obtain  three identities
\begin{equation}\label{ST}
\frac{(\nabla\ellK)^2}{\ellK^3}=S(x,y)\,,\qquad
\frac{\nabla^2\ellK}{\ellK^2}=T(x,y)\,,\qquad F(x,y)=0
\end{equation}
with certain rational functions $S$ and $T$. Eliminating here the
variable $x$ followed by $y$, one derives the equation
$L(\ellK,\nabla\ellK,\nabla^2\ellK)=0$, where $\ellK$ is understood
to be expressed as \eqref{K} and $\nabla\ellK$ and $\nabla^2\ellK$
as above. This is nothing but the 3rd order  \ode\ satisfied by the
connection object $\Gamma$. By construction \odes\ \eqref{ST} and
their polynomial consequences are invariant. We thus have arrived at
the following result.

\begin{theorem}\label{T2}
Let $\R$ be an arbitrary compact Riemann surface defined by equation
\eqref{1} or an arbitrary orbifold whose compactification is
equivalent to \eqref{1}. Let $\Gamma(\tau)$ be a uniformizing
representation for a connection object $\Gamma$ on $T^*(\R)$. Then
$\Gamma(\tau)$ satisfies the  algorithmically derivable 3rd order
autonomic polynomial \ode
\begin{equation}\label{Xi}
\Xi(\dddot{\smash[b]{\,\Gamma}},\ddot{\mathrm{\Gamma}},
\dot{\mathrm{\Gamma}},\Gamma)=0
\end{equation}
whose general solution is given by the following single-valued
analytic function\/$:$
$$
\Gamma=\frac{a\,d-b\,c}{(c\,\tau+d)^2}\,
\Gamma\Big(\Mfrac{a\,\tau+b}{c\,\tau+d}\Big)-
2\,c\,\frac{(a\,d-b\,c)}{c\,\tau+d}
$$
with free constants $\{a,b,c,d\}$.
\end{theorem}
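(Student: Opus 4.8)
Looking at this theorem, I need to prove two separate claims: (1) that $\Gamma(\tau)$ satisfies an algorithmically derivable third-order autonomous polynomial ODE $\Xi(\dddot\Gamma,\ddot\Gamma,\dot\Gamma,\Gamma)=0$, and (2) that the general solution of this ODE has the stated form parametrized by four constants $\{a,b,c,d\}$. Let me sketch how I would approach each.

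=== PROOF PROPOSAL ===

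The plan is to prove the two assertions separately, since the existence of the \ode\ has in essence already been carried out in the text preceding the statement, while the characterization of the general solution requires a genuinely new argument.

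For the \textbf{existence} part, I would simply observe that the derivation has been completed above: starting from the curvature-like object $\ellK=\dot\Gamma-\frac12\Gamma^2$ of \eqref{K}, one shows it is a weight-4 2-differential equal to $\bcQ(x,y)(\nabla u)^2$, and by taking successive $\nabla$-derivatives and eliminating the auxiliary scalar $\nabla u=\dot u$ one arrives at the three rational identities \eqref{ST}. Eliminating the coordinate $x$ and then $y$ by resultants (using $F(x,y)=0$) yields a polynomial relation $L(\ellK,\nabla\ellK,\nabla^2\ellK)=0$. Substituting the explicit expressions $\ellK=\dot\Gamma-\frac12\Gamma^2$, together with $\nabla\ellK=\dot\ellK-2\Gamma\ellK$ and $\nabla^2\ellK=(\partial_\tau-3\Gamma)(\dot\ellK-2\Gamma\ellK)$, turns this into a polynomial equation $\Xi(\dddot\Gamma,\ddot\Gamma,\dot\Gamma,\Gamma)=0$ of exactly third order in $\Gamma$. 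The elimination is algorithmic (resultants of rational functions), so no obstacle arises here; the point worth stating is only that the resulting \ode\ is autonomous, which follows because the intermediate objects $\ellK,\nabla\ellK,\nabla^2\ellK$ and the rational functions $S,T$ carry no explicit dependence on $\tau$.

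For the \textbf{general solution}, I would argue as follows. The \ode\ \eqref{Xi} is a third-order scalar equation, so its general solution depends on three constants; but the family $\Gamma\mapsto (ad-bc)(c\tau+d)^{-2}\Gamma(\tfrac{a\tau+b}{c\tau+d})-2c(ad-bc)(c\tau+d)^{-1}$ carries four parameters $\{a,b,c,d\}$ which are homogeneous of degree zero (scaling all four leaves the expression invariant), hence genuinely three essential constants. First I would verify that every member of this family \emph{is} a solution: this is precisely the statement that the transformation law \eqref{tmp} of Proposition~\ref{P1} maps a connection to a connection, and since \eqref{Xi} was built entirely from the invariant objects $\ellK,\nabla\ellK,\nabla^2\ellK$, which transform tensorially under the projective substitution \eqref{proj}, the \ode\ is invariant under the whole family and the orbit of any one solution lies in the solution set. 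Conversely, to see that this orbit exhausts the general solution, I would invoke that \eqref{Xi} is third-order: its solution space is a three-parameter family, and the three essential parameters of the projective orbit already fill it out, so no further solutions exist.

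The \textbf{main obstacle} I anticipate is the converse direction of the second part, namely confirming that the projective orbit really is \emph{all} of the general solution rather than a proper subfamily. The counting argument (three parameters in a third-order \ode) is suggestive but not rigorous on its own, since one must exclude the possibility that the orbit is a singular or lower-dimensional solution locus while the ``generic'' three-parameter family is something else. The clean way to close this gap is to reduce order: the object $\ellK=\dot\Gamma-\frac12\Gamma^2$ is an invariant built from $\Gamma$, and the map $\Gamma\mapsto\ellK$ is the Riccati-type substitution whose fibers are exactly the three-parameter projective orbits (two solutions $\Gamma,\Gamma'$ share the same $\ellK$ iff they differ by a Möbius reparametrization, by the standard theory of the Schwarzian). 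Thus the \ode\ for $\Gamma$ is the pullback under this Riccati map of an \ode\ for the genuine invariant $\ellK$, and the general solution of the $\Gamma$-equation is obtained from a particular one by acting with the full three-parameter symmetry group. This identifies the projective orbit with the general solution and completes the proof.
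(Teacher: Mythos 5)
Your proposal diverges from the paper's proof in a way that leaves a genuine gap in the existence part, and the repair you propose for the general-solution part rests on an incorrect claim.

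\textbf{Existence of the ODE.} You dismiss this as already ``completed above'', but the derivation preceding the theorem establishes the identities $\ellK=\bcQ(x,y)\,(\nabla u)^2$, etc., only for the one special connection $\Gamma=\frac{d}{d\tau}\ln\dot u$ built from the exact differential of the distinguished function $u=R(x,y)$. The theorem asserts the result for an \emph{arbitrary} connection on $T^*(\R)$, i.e.\ for $\tilde\Gamma=\Gamma-r(x,y)\,\dot u$ with arbitrary meromorphic $r$, and this extension is precisely what the paper's proof is devoted to: since $\tilde\nabla^2u=r(x,y)\,(\tilde\nabla u)^2$ no longer vanishes, the curvature becomes $\tilde\ellK=\big(\bcQ-\frac{r'}{R'}-\frac12 r^2\big)(\tilde\nabla u)^2$ and the higher covariant derivatives acquire extra terms in $r$ and $r'$, so the elimination must be redone with these corrected rational coefficients to produce the (different) polynomial ODE for $\tilde\Gamma$. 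As written, your argument proves the statement for one connection only, not for the class named in the theorem.

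\textbf{General solution.} Your verification that the projective orbit consists of solutions is fine and matches the paper, which relies on the invariance-by-construction of the identities \eqref{ST} and simply says the formula follows from \eqref{tmp}; your candor that the parameter count is only suggestive is also fair, since the paper offers nothing more rigorous. But your proposed ``clean way to close the gap'' is wrong: the fibers of $\Gamma\mapsto\ellK=\dot\Gamma-\frac12\Gamma^2$ are \emph{not} the three-parameter projective orbits. For fixed $\ellK(\tau)$ this is a Riccati equation for $\Gamma$, whose solution set is a \emph{one}-parameter family (it corresponds to post-composing $u$ with a M\"obius map $u\mapsto\frac{\alpha u+\beta}{\gamma u+\delta}$, which shifts $\Gamma$ by $-2\gamma\,\dot u/(\gamma u+\delta)$ and leaves $\ellK=\{u,\tau\}$ unchanged). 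The reparametrizations $\tau\mapsto\frac{a\tau+b}{c\tau+d}$, by contrast, transform $\ellK$ tensorially as a 2-differential and therefore move between fibers rather than within one, so this reduction cannot identify the orbit with the general solution. A correct route would be to reconstruct $(x(\tau),y(\tau))$ from a given solution via the algebraic system \eqref{ST} and appeal to the fact that the general solution of $[x,\tau]=\bcQ(x,y)$ is $x\big(\frac{a\tau+b}{c\tau+d}\big)$; neither your proposal nor the paper carries this out.
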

\begin{proof}
The last formula follows from the transformation law \eqref{tmp} and
the only thing is left to be proved is that any other connection
satisfies the certain \ode\ \eqref{Xi}.

Let $\tilde\Gamma$ be such a connection:
$\tilde\Gamma=\Gamma-r(x,y)\,\dot u$. Redefine the objects used
above:
$$
\tilde\nabla=\frac{d}{d\tau}-k\,\tilde\Gamma\,,\qquad
\tilde\ellK\DEF{\tilde\Gamma}_{\!\tau}-\frac12\,\tilde\Gamma
$$
and take into account that $u$, in contrast to the preceding, is no
longer a flat coordinate:
$$
\tilde\nabla u=\dot u\,,\qquad \tilde\nabla^2 u=
r(x,y)\,(\tilde\nabla u)^2\,.
$$
It follows that
\begin{equation}\label{22}
\tilde\ellK=\frac{d}{d\tau}(\Gamma-r\,\dot u)-
\frac12\,(\Gamma-r\,\dot u)^2=
\dot{\mathrm{\Gamma}}-\frac12\,\Gamma^2-\dot r\,\dot u-
\frac12\,r^2\dot u^2\,.
\end{equation}
Let prime $'$, as always in the sequel, stand for the total
$x$-derivative:
$$
f'\DEF f_x-\frac{F_x}{F_y}\,f_y\,.
$$
Since $\dot u=R'\,\dot x$, expression \eqref{22} can be rewritten as
follows
$$
\tilde\ellK=\bigg(\bcQ-\frac{r'}{R'}-\frac12\,r^2 \bigg)
(\tilde\nabla u)^2\FED Q(x,y)\cdot(\tilde\nabla u)^2\,.
$$
Hence we compute
$$
\tilde\nabla\tilde\ellK=
\frac{1}{R'}\,(Q'+2\,r\,QR')\cdot(\tilde\nabla u)^3\,,\qquad
\tilde\nabla^2\tilde\ellK=
\frac{1}{R'}\!
\left\{\!\mbig[7](\frac{Q'}{R'}\mbig[7]{)}'+
2\,Q\,r'+5\,r\,Q'+6\,r^2QR'\right\}
\cdot(\tilde\nabla u)^4
$$
and therefore
$$
\frac{(\tilde\nabla\tilde\ellK)^2}{\tilde\ellK^3}=
\frac{(Q'+2\,r\,QR')^2}{Q^3R'^2}\,,\qquad
\frac{\tilde\nabla^2\tilde\ellK}{\tilde\ellK^2}=
\frac{1}{Q^2R'}\!
\left\{\!\mbig[7](\frac{Q'}{R'}
\mbig[7]{)}'+2\,Q\,r'+5\,r\,Q'+6\,r^2QR'\right\}.
$$
As before,  equation of the form \eqref{Xi} follows by elimination
of the pair $(x,y)$.
\end{proof}

\subsection{Differentials, connections, and differential closedness}

Since invariant \odes\ follow from differential properties of
automorphic functions, there should exist an equivalent
`non-invariant' description/explanation in the traditional language
of linear \odes.

Let us return to Eq.~\eqref{Q}. It is clear, that its two linearly
independent solutions
$$
\Psi_1(x)=\sqrt{\dot x\,}\,,\qquad \Psi_2(x)=\tau\,\sqrt{\dot x\,}
$$
are not differentially closed. Therefore from differential viewpoint
the \emph{complete} and \emph{closed} differential apparatus on $\R$
must involve not only the principal equation \eqref{Q}:
\begin{equation}\label{Psi}
\Psi_{\!\!\mathit{xx}}=\frac12\bcQ(x,y)\,\Psi\,,\tag{$\ref{Q}'$}
\end{equation}
but equation for a derivative $\Phi$ of the $\Psi$-function:
\begin{equation}\label{Phi}
\Phi\DEF\Psi_{\!x}\qquad\hence\qquad
\Phi_{\!\mathit{xx}}-\ln'\!\!\!\bcQ(x,y)\cdot\Phi_x-
\frac12\bcQ(x,y)\,\Phi=0\,.
\end{equation}
Clearly, this equation also belongs to a Fuchsian class and has its
proper monodromy group\footnote{It is an interesting problem to
study the  extended set of Fuchsian equations
\eqref{Psi}--\eqref{Phi} in the framework of uniformization theory.
This question, including a series of examples, will be the subject
matter of a separate work. When $\GR$ is the zero genus group
$\bo{\mathrm{\Gamma}}(1)$ a closure of the `$\Psi$-theory' into the
`$\Phi$-one' is considered in \cite{jon} in the context of the
modelling selforganised patterns of vegetation.}. Since connections
are defined up to differentials, we can set one of them as follows
$$
\Gamma(\tau)=\frac{d}{d\tau}\!\ln\!\dvphi(\tau)\qquad\hence\qquad
\Gamma(\tau)=2\,\Psi\,\Psi_{\!x}=2\,\frac{\dot{\mathrm{\Psi}}}{\Psi}
$$
and, therefore, introduction of a connection and the
$\Psi$-derivative are in fact the equivalent operations. Complete
set of data for the theory can thus be written in both the $x$- and
$\tau$-representation:
$$
\big\{ \Psi_1(x),\,\Psi_2(x),\,\Psi'\!\!\!{}_1(x)\big\}\quad
\scalebox{1.5}[1]{$\Leftrightarrow$}\quad\big\{\vphi(\tau),
\dvphi(\tau),\Gamma(\tau) \big\}\,.
$$
In this context the above mentioned 3rd order  \odes\ satisfied by
the 1- and 2-differential (say, $\ellK$) are also the direct and
algorithmical consequences of the linear equations. Indeed, first we
take the base differential $\psi=\dot x$, that is
$\psi=\pow{\Psi}{1}{2}$. Taking into account that
$\frac{d}{dx}=\psi^{\sm1}\frac{d}{d\tau}$, we obtain the following
$\tau$-equivalents of \eqref{Psi}--\eqref{Phi}:
\begin{equation}\label{df1}
2\,\ddot{\psi}\,\psi-3\,\dot\psi^2
=2\,\bcQ(x,y)\,\psi^4\,,\qquad
\dddot\psi\,\psi^2-6\,\ddot{\psi}\,\dot{\psi}\,\psi+6\,
\dot\psi^3=\bcQ'(x,y)\,\psi^6\,.
\end{equation}
Pass now to the arbitrary differential $\psi=R^{\sm1}(x,y)\,\dot x$.
Then these two  expressions should be changed according to the rules
$$
\psi\to R\cdot\psi\,,\qquad
\frac{d}{d\tau}\psi\to R\,R'\cdot\psi^2+
R\cdot\dot\psi\,,\qquad\frac{d}{d\tau}R\to R\,R'\cdot\psi\,.
$$
As in the case of connections (Sect.~\ref{odes}), we get  three
polynomial equations
\begin{equation}\label{df2}
S\big(\psi,\,\dot\psi,\,\ddot\psi;x,y\big)=0\,,
\qquad
T\big(\psi,\,\dot\psi,\,\ddot{\psi},\,
\dddot\psi;x,y\big)=0\,,\qquad F(x,y)=0
\end{equation}
which are, by construction, are independent of coordinate choice
\eqref{proj}. By elimination of $(x,y)$ the differential
$\psi=\psi(\tau)$ satisfies a certain autonomic \ode\ of the form
$\tilde\Xi\big(\psi,\,\dot\psi,\,\ddot\psi,\, \dddot\psi\big)=0$
whose general solution is
$$
\psi=\frac{a\,d-b\,c}{(c\,\tau+d)^2}\,
\psi\Big(\Mfrac{a\tau+b}{c\tau+d}\Big)\,,
$$
where $\psi(\tau)$ is any particular one.

Thus all the analytic geometric objects on arbitrary orbifolds of
finite genus are constructed to be governed by invariant (autonomic)
\odes: scalars are described by Eq.~\eqref{Q}, connections by
Eq.~\eqref{Xi}, and 1-differentials by Eqs.~\eqref{df1} or
\eqref{df2}.

It is worthy of special emphasis that we have nowhere used the fact
that the group $\G_x$ must be of 1st  Fuchsian kind  acting on $\Hp$
(with unique accessory parameters in \eqref{Psi}). All the
statements above hold for $\bcQ$-functions with different values of
accessory parameters determining the 2nd kind Fuchsian (Kleinian)
groups acting in $\overline{\mathbb{C}}$ without invariant circle
(groups of Schottky, Weber, and Burnside \cite{ford}).

\begin{corollary}\label{C2}
Let $\R$ be a hyperbolic Riemann surface or an orbifold of finite
analytic type with uniformizing group $\GR$ of Fuchsian or Schottky
type. Then the $\tau$-representations for meromorphic differentials
and connection objects on $T^*(\R)$ satisfy the algorithmically
derivable autonomic {\smaller[2]ODE}s of 3rd order.
\end{corollary}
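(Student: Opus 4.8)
The plan is to recognize that Corollary~\ref{C2} is a consolidation of two derivations already carried out---Theorem~\ref{T2} for the connection objects and the chain \eqref{df1}--\eqref{df2} for the meromorphic differentials---together with the single extra observation that neither derivation ever invoked the first-kind Fuchsian hypothesis. Accordingly, I would first isolate the common differential-algebraic skeleton shared by both arguments, and then verify that this skeleton survives for uniformizing groups of Schottky (second-kind) type.

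The skeleton runs as follows. One fixes a generator $u=R(x,y)$ of the field $\mathbb{C}(x,y)$ whose automorphism coincides with $\pi_1^{}(\R)$, so that the invariant equation $[u,\tau]=\bcQ(x,y)$ of \eqref{main} holds. Using that $x(\tau)$ and $y(\tau)$ are scalars on $\R$ (Proposition~\ref{C1}), so that $\dot x=\nabla x$ and $\dot y=\nabla y$, and using the covariant derivative $\nabla=\partial_\tau-k\,\Gamma$, one forms the $2$-differential $\ellK$ of \eqref{K} in the connection case, or works with the $1$-differential $\psi$ directly. Computing $\nabla\ellK,\nabla^2\ellK$ (respectively $\dot\psi,\ddot\psi,\dddot\psi$) and eliminating the uniformizing factor yields three relations of the shape \eqref{ST}: two rational scalar identities in $(x,y)$ supplemented by $F(x,y)=0$. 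Eliminating $x$ and then $y$ leaves an autonomic third-order polynomial \ode\ in the object alone---\eqref{Xi} for $\Gamma$, or $\tilde\Xi(\psi,\dot\psi,\ddot\psi,\dddot\psi)=0$ for $\psi$. Because every step is a rational manipulation of the data $\bcQ,R,F$, the procedure is algorithmic, which supplies the ``algorithmically derivable'' clause; and the orbifold case needs no separate treatment since Theorem~\ref{T2} already admits it.

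To reach the Schottky case I would observe that nowhere in this skeleton is the numerical value of the accessory parameters in $\bcQ$ used, nor the existence of an invariant circle, nor the action of $\GR$ on $\Hp$: the elimination treats $\bcQ$ as a fixed rational function and regards \eqref{ST} as formal consequences of $[u,\tau]=\bcQ$ and of the covariant calculus. For a second-kind Kleinian (Schottky, Weber, Burnside) uniformization the generator $u(\tau)$ is still single-valued and automorphic on the domain of discontinuity in $\overline{\mathbb{C}}$, still satisfies $[u,\tau]=\bcQ(x,y)$ with merely different accessory values, and $x,y$ remain scalars. Hence the identical three identities and the identical elimination apply \emph{verbatim}, yielding the assertion for both the differential and the connection in the Schottky setting.

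The step I expect to be the genuine obstacle is the \emph{non-degeneracy} of the elimination: one must confirm that the two scalar identities of \eqref{ST}, together with $F(x,y)=0$, are functionally independent enough to remove both coordinates and leave a nontrivial autonomic relation, rather than collapsing to $0=0$. This is precisely where the hypothesis $\mathrm{Aut}(u(\tau))=\pi_1^{}(\R)$---not a larger group---is really used: it guarantees that $u$, and hence the pair $(S,T)$, depends on the full function field rather than a proper subfield, so that the excessive-automorphism analysis of the preceding subsections is exactly what secures the required independence. Once that is granted, the resultants producing \eqref{Xi} and $\tilde\Xi$ are routine, and the invariance of the output under the projective changes \eqref{proj} follows, as in Theorem~\ref{T2}, from the coordinate-free character of the objects entering \eqref{ST}.
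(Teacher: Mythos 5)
Your proposal matches the paper's own argument: Corollary~\ref{C2} is obtained there exactly as you describe, by noting that the elimination scheme of Theorem~\ref{T2} and of Eqs.~\eqref{df1}--\eqref{df2} never invokes the first-kind Fuchsian hypothesis, so it carries over verbatim to Schottky-type uniformizations with merely different accessory-parameter values. Your added remark on the non-degeneracy of the elimination is a reasonable refinement the paper leaves implicit, but it does not change the route.
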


Algorithmical constructions here are the same as in the case of
$\Hp$ described above, except that the different values of accessory
parameters in linear equations \eqref{Q} will lead to different
nonlinear autonomic equations for differentials and connections. It
may be noted here that these \odes\ carry  all the information about
group and, if group is $\GR$, about Riemann surface itself. They can
serve as an alternative to the linear but non-autonomic Fuchsian
\odes\ and deserve to be further investigated in their own rights.

\thebibliography{99}

\bibitem{accola} {\sc Accola,~R.~D.~M.}
\textit{Topics in the Theory of Riemann Surfaces}. Lect.\ Notes in
Math. {\bf1595}. Springer: Berlin--Heidelberg (1994).

\bibitem{zagier} {\sc Bruinier,~J.~H., van~der~Geer,~G., Harder,~G.
\& Zagier,~D.} \textit{The \textup{1-2-3} of Modular Forms}.
Springer: Berlin--Heidelberg (2008).

\bibitem{dalzell} {\sc Dalzell,~D.~P.}
\textit{A note on automorphic functions}. Journ.\ London Math.\
Soc.\ (1930) {\bf 5}, 280--282.

\bibitem{hurwitz} {\sc Hurwitz,~A.} \textit{Ueber die
Differentialgleichungen dritter Ordnung, welchen die Formen mit
linearen Transformationen in sich gen\"ugen}. Math.\ Annalen (1899)
{\bf XXXIII}, 345--352.

\bibitem{novikov}
{\sc Dubrovin,~B.~A., Novikov,~S.~P. \& Fomenko,~A.~T.}
\textit{Modern Geometry --- Methods and Applications. Part II: The
geometry and topology of manifolds}. Grad.\ Texts in Math.\ {\bf
104}. Springer: New York (1985).

\bibitem{dubrovin} {\sc Dubrovin,~B.}
\textit{Geometry of $2$D topological field theories}. In:
\textit{Integrable Systems and Quantum Groups}. (eds.
M.~Francaviglia \& S.~Greco). Lect.\ Notes in Math.\ (1996) {\bf
1620}, 120--348.

\bibitem{farkas} {\sc Farkas,~H.~M. \& Kra,~I.} \textit{Riemann
Surfaces}. Springer: New York (1980).

\bibitem{ford} {\sc Ford,~L.}
\textit{Automorphic Functions}. McGraw--Hill: New York (1929).

\bibitem{forster} {\sc Forster,~O.}
\textit{Riemannsche Fl\"achen}. Springer: Berlin--Heidelberg--New
York (1977).

\bibitem{jost} \textsc{Jost,~J.}
\textit{Compact Riemann Surfaces}. 3rd ed. Springer:
Berlin--Heidelberg (2006).

\bibitem{maier} {\sc Maier,~R.~S.}
\textit{Nonlinear differential equations satisfied by certain
classical modular forms}. Manuscripta Mathematica (2011)
{\bf134}(1/2), 1--42.

\bibitem{nevanlinna} \textsc{Nevanlinna,~R.}
\textit{Uniformisierung}. Die Grundlehren der Math.\ Wissenschaften
\textbf{64}. Springer: Berlin (1953).

\bibitem{hawley} {\sc Schiffer,~M. \& Hawley,~N.~S.}
\textit{Connections and conformal mapping}. Acta Math.\ (1962)
{\bf107}, 175--274.

\bibitem{jon} {\sc Sherratt,~J.~A. \& Brezhnev,~Yu.}
The mean values of the Weierstrass elliptic function $\wp$: Theory
and application. Physica {\bf D} (2013) {\bf 263}, 86--98.

\bibitem{whittaker} {\sc Whittaker,~E.}
\textit{On the Connexion of Algebraic Functions with Automorphic
Functions}. Phil.\ Trans.\ Royal Soc.\ London (1899) {\bf A192},
1--32.

\bibitem{whittaker3} \textsc{Whittaker,~J.~M.}
\textit{The uniformisation of algebraic curves.} Proc.\ London
Math.\ Soc.\  (1930) {\bf 5}, 150--154.

\end{document}